\def\a{\mathbf{a}}
\def\bb{\mathbf{b}}
\def\m{\mathbf{m}}
\def\n{\mathbf{n}}
\def\C{\mathbb{C}}
\def\Z{\mathbb{Z}}
\def\Be{\mathcal{B}}
\newtheorem{theorem}{\hspace*{\parindent}Theorem}
\newtheorem{lemma}{\hspace*{\parindent}Lemma}
\DeclareMathOperator*{\res}{\mathrm{res}}
\title{A new identity for a sum of products of the generalized hypergeometric functions}
\author{Dmitrii Karp$^{\rm a}$\footnote{Corresponding author. E-mail: D.B.\:Karp -- \emph{dmitriibkarp@tdtu.edu.vn}, Alexey Kuznetsov --  \emph{kuznetsov@mathstat.yorku.ca}}~~and Alexey Kuznetsov$^{\rm b}$
\\[10pt]
\small{\textit{$\phantom{1}^a$Faculty of Mathematics and Statistics, Ton Duc Thang University, Ho Chi Minh City, Vietnam}}\\\small{\textit{$\phantom{1}^b$Department of Mathematics and Statistics, York University, Toronto, ON, Canada}}}
\date{}
\begin{document}
\maketitle

\begin{abstract}
Reduction formulas for sums of products of hypergeometric functions can be traced back to Euler. This topic has an intimate connection to summation and transformation formulas, contiguous relations and algebraic properties of the (generalized) hypergeometric differential equation. Over recent several years, important discoveries have been made in this subject by Gorelov, Ebisu, Beukers and Jouhet and Feng, Kuznetsov and Yang. In this paper, we give a generalization of Feng, Kuznetsov and Yang identity covering also Ebisu's and Gorelov's formulas as particular cases.
\end{abstract}

\bigskip

Keywords: \emph{Generalized hypergeometric function, duality relation, sum of products identity}

\bigskip

MSC2010: 33C20

\bigskip

\section{Introduction.} The history of transformation and reduction formulas for linear combinations of products of hypergeometric functions can be traced back to Euler, as his celebrated identity
$$
{}_2F_1\!\left(\!\begin{array}{c}a, b\\c\end{array}\!\!\vline\,\,x\right)=(1-x)^{c-a-b}{}_2F_1\!\left(\!\begin{array}{c}c-a,c-b\\c\end{array}\!\!\vline\,\,x\right)
$$
immediately leads to the reduction formula
\begin{equation*}
{}_2F_1\!\left(\!\begin{array}{c}a, b\\c\end{array}\!\!\vline\,\,x\right){}_2F_1\!\left(\!\begin{array}{c}1-a, 1-b\\2-c\end{array}\!\!\vline\,\,x\right)
-{}_2F_1\!\left(\!\begin{array}{c}c-a, c-b\\c\end{array}\!\!\vline\,\,x\right){}_2F_1\!\left(\!\begin{array}{c}1+a-c, 1+b-c\\2-c\end{array}\!\!\vline\,\,x\right)=0.
\end{equation*}
The symbol ${}_pF_{q}$ in the above formulas and throughout the paper stands for the generalized hypergeometric functions \cite[(2.1.2)]{AAR}. Later on, Gauss found a similarly flavored relation \cite[(6)]{Nesterenko1995}
\begin{multline*}
{}_2F_1\!\left(\!\begin{array}{c}a, b\\c\end{array}\!\!\vline\,\,x\right){}_2F_1\!\left(\!\begin{array}{c}-a, -b\\-c\end{array}\!\!\vline\,\,x\right)
\\
-x^2\frac{ab(c-a)(c-b)}{c^2(c^2-1)}{}_2F_1\!\left(\!\begin{array}{c}1-a,1-b\\2-c\end{array}\!\!\vline\,\,x\right){}_2F_1\!\left(\!\begin{array}{c}1+a,1+b, \\2+c\end{array}\!\!\vline\,\,x\right)=1
\end{multline*}
and Legendre discovered an identity for a linear combination of three products of the complete elliptic integrals $K$ and $E$ with complimentary arguments $k$ and $k'=\sqrt{1-k^2}$ (both $K$ and $E$ are a particular case of  the Gauss function ${}_2F_1$) \cite[Theorem~3.2.7]{AAR}.  Legendre's formula was generalized by Elliott in 1904 to a linear combination of three products of the Gauss hypergeometric functions containing three independent parameters \cite[Theorem~3.2.8]{AAR}. Another two-parameter generalization of Legendre's identity was found in \cite{AQVV} which was soon thereafter generalized in \cite{BNPV} to an identity with four independent parameters covering both Elliott's formula and the formula from \cite{AQVV}.  Another identity for a linear combination of three products of the Gauss functions with coefficients depending quadratically on the argument and containing three independent parameters was given in \cite[Theorem~3.2]{GKLZ}.

An independent development started in 1931 with the paper \cite{Darling} by Darling, where the author discovered probably the first reduction formulas for linear combinations of products of Clausen's  hypergeometric function ${}_3F_2$ (also called  Thomae's hypergeometric function by some authors). Soon thereafter Bailey \cite{Bailey} found a new method for proving Darling's identity which also lead to its generalizations.   It seems that the first author to obtain a reduction formula for a linear combination of products of the generalized hypergeometric functions ${}_{p+1}F_{p}$ with $p>3$ was Nesterenko in his work on Hermite-Pad\'{e}  approximation \cite[Theorem~5]{Nesterenko1995}. Probably due to complicated notation his work remained largely unnoticed. It was referred to, however, in Gorelov's paper \cite{Gorelov2010}, where the author discovered another extension of Darling's formulas to general ${}_pF_{q}$ \cite[Corollary~1]{Gorelov2010}.

The years 2015-2016 marked an important breakthrough in the subject. Beukers and Jouhet published the paper \cite{BeukersJouhet} in 2015, where they presented a very general identity involving derivatives up to certain order of the generalized hypergeometric function ${}_{p+1}F_{p}$ and gave a purely algebraic proof based on the theory of general differential and difference modules.  Soon thereafter the second author jointly with Feng and Yang found in \cite{FKY} another identity for the sum of products of general ${}_pF_{q}$ functions, involving more free parameters than Beukers-Jouhet formula and coinciding with it if the order of derivatives is set to zero. This particular case of Beukers-Jouhet's formula also coincides with Gorelov's formula \cite[(4)]{Gorelov2010} (for $p<q+1$ one also has to apply confluence to derive Gorelov's formula from that of  Beukers-Jouhet). The proof in \cite{FKY} is entirely different from that of the previous authors and is based on the so-called non-local derangement identity.  Finally, we mention a very important work of Ebisu \cite{Ebisu}, who already in 2012 discovered an identity for the sum of products of the Gauss hypergeometric function ${}_2F_1$ which after certain change  can be viewed as a generalization of the particular ${}_2F_1$ case of the Feng, Kuznetsov and Yang formula \cite[Theorem~1]{FKY}.  In \cite{KKSiberian2018} the first author (jointly with S.I.\:Kalmykov) gave an independent derivation of a generalization of the particular ${}_1F_1$ case of the Feng, Kuznetsov and Yang formula, which can also be obtained by applying confluence to Ebisu's result.  Two independent linearization identities for the sum of products of the Gauss functions were obtained by the first author in \cite[(9),(14)]{KarpJMS2016}.

The purpose of this note to give a generalization of both Feng, Kuznetsov and Yang  identity \cite[Theorem~1]{FKY} and Ebisu's identity   \cite[Theorem~3.7]{Ebisu}. We do so by introducing an additional vector of integer shifts in Feng, Kuznetsov and Yang  identity.  Our formula reduces to that of Ebisu in the case of the Gauss hypergeometric function ${}_2F_{1}$.  The proof is a generalization of the proof in  \cite{FKY} and hinges on the interpretation of the coefficients of the sum of products of the hypergeometric functions as a sum of residues of a specially tailored  rational function.

\section{Main results}

Suppose $r\ge2$ is an integer, $\a=(a_1,\ldots,a_{r})\in\C^{r}$, $\bb=(b_1,\ldots,b_{r})\in\C^{r}$,
$\m=(m_1,\ldots,m_r)\in\Z^r$  and  $\n=(n_1,\ldots,n_r)\in\Z^r$.  Denote further
$$
M=m_1+\cdots+{m_r},~~~N=n_1+\cdots+{n_r}.
$$
We use the standard notation $\Gamma(z)$ for Euler's gamma function and $(z)_k=\Gamma(z+k)/\Gamma(z)$, $k\in\Z$, for the rising factorial (note that $k$ can be any integer). It is convenient to introduce the shorthand notation for the products
$$
\Gamma(\a+z)=\Gamma(a_1+z)\cdots\Gamma(a_r+z),~~~(\a)_{\n}=(a_1)_{n_1}(a_2)_{n_2}\cdots(a_r)_{n_r}.
$$
The symbol $\a_{[j]}$ stands for the vector $\a$ with $j$-th component omitted:
$$
\a_{[j]}=(a_1,\ldots,a_{j-1},a_{j+1},\ldots,a_r),
$$
and let $\a+\beta=(a_1+\beta,\ldots,a_r+\beta)$. In particular,
$$
(a_i-\a_{[i]})=(a_i-\a_{[i]})_{1}=(a_i-a_1)(a_i-a_2)\cdots(a_{i}-a_{i-1})(a_{i}-a_{i+1})\cdots(a_{i}-a_r).
$$
Define
\begin{equation}\label{eq:mminnmax}
m_{\min}=\min\limits_{1\le{i}\le{r}}(m_i),~~~~~n_{\max}=\max\limits_{1\le{i}\le{r}}(n_i),
\end{equation}
and
\begin{equation}\label{eq:p-defined}
p=\max\{-1,M-N-r+1\}.
\end{equation}
Our main result is the following theorem.
\begin{theorem}\label{th:main}
 Assume that the components of $\a$ are distinct modulo integers. Then for some complex numbers $\beta_j$
\begin{multline}\label{eq:main}
\sum\limits_{i=1}^{r}\frac{(1-\bb+a_i)_{\m-n_i}z^{-n_i}}{(a_i-\a_{[i]})_{\n_{[i]}-n_i+1}}
{}_{r}F_{r-1}\bigg(\begin{matrix}\bb-a_i\\1+\a_{[i]}-a_i\end{matrix}\Big\vert z\bigg)
{}_{r}F_{r-1}\bigg(\begin{matrix}1-\bb+a_i+\m-n_i\\1-\a_{[i]}+a_i+\n_{[i]}-n_i\end{matrix}\Big\vert z\bigg)
\\
=(1-z)^{-p-1}\sum\limits_{j=-n_{\max}}^{p-m_{\min}}\beta_jz^{j}.
\end{multline}\end{theorem}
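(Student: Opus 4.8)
The plan is to follow the residue-based strategy announced in the introduction, generalizing the proof of the Feng--Kuznetsov--Yang identity. The key idea is to interpret the left-hand side of~\eqref{eq:main} as a sum of residues of a single rational-times-gamma kernel. Concretely, I would build an auxiliary function of an integration/summation variable $s$ of the form
\begin{equation*}
\Phi(s)=z^{s}\,\frac{\Gamma(\bb-a_i+\cdots)}{\prod(\ldots)}\cdot\frac{1}{\prod_{i=1}^{r}\Gamma(1+a_i+s)}\,\cdot(\text{shift factors depending on }\m,\n),
\end{equation*}
chosen so that summing the residues of $\Phi$ at the two arithmetic progressions of poles coming from the two factors $\Gamma(a_i+\cdots)$ reproduces exactly the two ${}_{r}F_{r-1}$ series in the $i$-th summand, with the prefactor $(1-\bb+a_i)_{\m-n_i}z^{-n_i}/(a_i-\a_{[i]})_{\n_{[i]}-n_i+1}$ emerging from evaluating the residues. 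The hypothesis that the components of $\a$ are distinct modulo integers is exactly what guarantees that these poles are simple and the partial-fraction/residue bookkeeping separates cleanly into the $r$ terms indexed by $i$.

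Next I would reorganize the whole left-hand side by \emph{summing the residues in the other order}: instead of grouping residues by which $\Gamma$-factor produced them (which yields the $r$ products of hypergeometric functions), I group them by the value of the exponent of $z$. Because $\Phi$ is a product of a rational function and ratios of gamma functions, the total sum over all poles collapses, via a Barnes-type or derangement argument, into a finite expression. The essential point is that the non-local derangement identity from~\cite{FKY}, suitably extended to carry the integer shift vectors $\m$ and $\n$, forces massive cancellation so that only finitely many powers of $z$ survive. I would track the highest and lowest powers of $z$ appearing after all residues are summed; these extremes are controlled precisely by $n_{\max}$ (from the $z^{-n_i}$ prefactors, the most negative exponent being $-n_{\max}$) and by $m_{\min}$ together with the degree count $p$ defined in~\eqref{eq:p-defined}.

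The factor $(1-z)^{-p-1}$ on the right-hand side I expect to arise from a generating-function identity of the type $\sum_{k\ge0}\binom{k+p}{p}z^{k}=(1-z)^{-p-1}$, reflecting that after cancellation the surviving residues assemble into a hypergeometric-type tail whose sum is an explicit negative power of $(1-z)$ times a polynomial. Thus the right-hand side is $(1-z)^{-p-1}$ times a Laurent polynomial in $z$, and I would verify that its support is exactly the range $-n_{\max}\le j\le p-m_{\min}$ by a degree/order-of-pole analysis at $z=0$ and $z=\infty$: the lower end $-n_{\max}$ comes from the most negative power among the $r$ terms, and the upper end $p-m_{\min}$ from matching the behaviour as $z\to1$ and $z\to\infty$ against the factor $(1-z)^{-p-1}$. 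The coefficients $\beta_j$ are then simply defined to be whatever these surviving residue sums produce, so the theorem asserts existence rather than an explicit closed form.

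The main obstacle, I expect, is the bookkeeping in the derangement/residue-interchange step: one must show that grouping the residues by power of $z$ really does terminate after finitely many terms and that the integer shifts $\m,\n$ do not spoil the cancellation that makes the Feng--Kuznetsov--Yang proof work. In particular, verifying that the shifted gamma ratios still satisfy the non-local derangement identity, and that the truncation bounds are \emph{exactly} $-n_{\max}$ and $p-m_{\min}$ rather than something weaker, will require careful analysis of the rising factorials $(a_i)_{n_i}$ with possibly negative indices and of how $M-N-r+1$ enters through $p$. Everything else (simplicity of poles, residue evaluation, identification of the prefactors) should be routine once the correct kernel $\Phi$ is written down.
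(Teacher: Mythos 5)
Your overall instinct --- residue calculus plus a generating-function identity producing $(1-z)^{-p-1}$ --- points in the right direction, but there are two genuine gaps. First, the kernel step is structurally flawed as stated: summing the residues of a single Barnes-type kernel $\Phi(s)$ over arithmetic progressions of poles in one variable $s$ yields a \emph{linear combination of single} hypergeometric series (this is how connection formulas are derived), not \emph{products} of two ${}_{r}F_{r-1}$'s of the same argument. The paper instead works coefficientwise: it expands each product by the Cauchy product, so that the coefficient of $z^k$ on the left-hand side is a finite double sum $\sum_{i=1}^{r}\sum_{j=0}^{k+n_i}\gamma^{k+n_i}_{i,j}$, and then observes that $\gamma^{k+n_i}_{i,j}$ is precisely the residue at $z=a_i+k-j$ of the rational function $f_k(z)=(z-\bb-k+1)_{\m+k}/(z-\a-k)_{\n+k+1}$ in a fresh variable (simplicity of these poles is where distinctness of the components of $\a$ modulo $\Z$ enters). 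The residue theorem --- the sum of all finite residues of a rational function equals its residue at infinity $C_{-1}(k)$ --- then plays the role of the non-local derangement identity you invoke, and there is nothing to ``extend'': for each fixed $k$ the identity is automatic, shifts and all.

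Second, and more importantly, your proposal is missing the key quantitative fact that makes the theorem true. You assert that ``only finitely many powers of $z$ survive,'' which is false whenever $p\ge0$: the right-hand side $(1-z)^{-p-1}\sum_j\beta_j z^j$ has an infinite Taylor tail. What is actually true (the paper's Lemma~\ref{lm:residue-polynomial}) is that $C_{-1}(k)$ is a \emph{polynomial in $k$ of degree} $p=\max\{-1,M-N-r+1\}$, and proving this requires real work: the paper substitutes Hermite's asymptotic expansion of $\log\Gamma$ into $\log f_k$, tracks Bernoulli-polynomial cancellations to show each coefficient $Q_j(k)$ has degree $j$ in $k$, and then exponentiates the series. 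Only with this lemma in hand does your generating-function identity $\sum_{k\ge0}\binom{k+p}{p}z^k=(1-z)^{-p-1}$, applied to $\sum_{k\ge-m_{\min}}q_p(k)z^k$ with $q_p$ of degree $p$, deliver both the factor $(1-z)^{-p-1}$ and the exact upper support bound $p-m_{\min}$; the lower bound $-n_{\max}$ comes simply from the finitely many initial terms with $-n_{\max}\le k\le -m_{\min}-1$, not from any delicate cancellation. Your alternative justification of the support ``by matching the behaviour as $z\to1$ and $z\to\infty$'' cannot serve as a proof: the left-hand side is defined by series convergent in $|z|<1$, and its behaviour at those points is inaccessible without precisely the coefficient analysis just described.
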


\textit{Proof of Theorem~\ref{th:main}}.  Using the Cauchy product expand:
\begin{multline}\label{eq:Th1proofchain}
S(z):=\sum\limits_{i=1}^{r}\frac{(1-\bb+a_i)_{\m-n_i}z^{-n_i}}{(a_i-\a_{[i]})_{\n_{[i]}-n_i+1}}
{}_{r}F_{r-1}\bigg(\begin{matrix}\bb-a_i\\1+\a_{[i]}-a_i\end{matrix}\Big\vert z\bigg)
{}_{r}F_{r-1}\bigg(\begin{matrix}1-\bb+a_i+\m-n_i\\1-\a_{[i]}+a_i+\n_{[i]}-n_i\end{matrix}\Big\vert z\bigg)
\\
=\sum\limits_{i=1}^{r}\frac{(1-\bb+a_i)_{\m-n_i}z^{-n_i}}{(a_i-\a_{[i]})_{\n_{[i]}-n_i+1}}
\sum\limits_{k=0}^{\infty}z^k\sum\limits_{j=0}^{k}
\frac{(\bb-a_i)_j(1-\bb+a_i+\m-n_i)_{k-j}}{(1+\a_{[i]}-a_i)_jj!(1-\a_{[i]}+a_i+\n_{[i]}-n_i)_{k-j}(k-j)!}
\\
=\!\sum\limits_{i=1}^{r}\sum\limits_{k=0}^{\infty}z^{k-n_i}\!\!\sum\limits_{j=0}^{k}
\!\frac{(1-\bb+a_i)_{\m-n_i}(\bb-a_i)_j(1-\bb+a_i+\m-n_i)_{k-j}}{(a_i-\a_{[i]})_{\n_{[i]}-n_i+1}(1+\a_{[i]}-a_i)_jj!(1-\a_{[i]}+a_i+\n_{[i]}-n_i)_{k-j}(k-j)!}
\\
=\sum\limits_{i=1}^{r}\sum\limits_{k=0}^{\infty}z^{k-n_i}\sum\limits_{j=0}^{k}\gamma^{k}_{i,j}
=\sum\limits_{i=1}^{r}\sum\limits_{k_i=-n_i}^{\infty}z^{k_i}\sum\limits_{j=0}^{k_i+n_i}\gamma^{k_i+n_i}_{i,j},
\end{multline}
where
$$
\gamma^{k}_{i,j}=\frac{(1-\bb+a_i)_{\m-n_i}(\bb-a_i)_j(1-\bb+a_i+\m-n_i)_{k-j}}
{\left(a_i-\a_{[i]}\right)_{\n_{[i]}-n_i+1}\left(1+\a_{[i]}-a_i\right)_jj!\left(1-\a_{[i]}+a_i+\n_{[i]}-n_i\right)_{k-j}(k-j)!}.
$$
Straightforward calculation using $(z)_k=\Gamma(z+k)/\Gamma(z)$ and $(z)_j=(-1)^j(1-z-j)_{j}$  yields
$$
(1-\bb+a_i)_{\m-n_i}(\bb-a_i)_j(1-\bb+a_i+\m-n_i)_{k-j}=(-1)^{jr}(1-\bb+a_i-j)_{\m+k-n_i}
$$
Similarly,
$$
\left(a_i-\a_{[i]}\right)_{\n_{[i]}-n_i+1}\left(1+\a_{[i]}-a_i\right)_j\left(1-\a_{[i]}+a_i+\n_{[i]}-n_i\right)_{k-j}
=(-1)^{j(r-1)}(-\a_{[i]}+a_i-j)_{\n_{[i]}+k-n_i+1},
$$
so that
$$
\gamma^{k}_{i,j}=\frac{(-1)^{j}(1-\bb+a_i-j)_{\m+k-n_i}}{(-\a_{[i]}+a_i-j)_{\n_{[i]}+k-n_i+1}j!(k-j)!}
$$
and
$$
\gamma^{k+n_i}_{i,j}=\frac{(-1)^{j}(1-\bb+a_i-j)_{\m+k}}{(-\a_{[i]}+a_i-j)_{\n_{[i]}+k+1}j!(k+n_i-j)!}.
$$
Note that by interpreting $j!=\Gamma(j+1)$ and $(k+n_i-j)!=\Gamma(k+n_i-j+1)$ we get  $\gamma^{k+n_i}_{i,j}=0$ for $j<0$ and $k+n_i<j$ which extends the above definition of $\gamma^{k+n_i}_{i,j}$ to these values of the indices. In view of this convention, we will have by rearranging the last expression in \eqref{eq:Th1proofchain}
$$
S=\sum\limits_{k=-n_{\max}}^{\infty}z^{k}\sum\limits_{i=1}^{r}\sum\limits_{j=0}^{k+n_i}\gamma^{k+n_i}_{i,j},
$$
Next note that $k+m_i\ge0$ for all $i=1,\ldots,r$ if $k\ge-m_{\min}$.  If $-m_{\min}\le-n_{\max}$, then $k\ge-m_{\min}$ for all terms in the above sum.  Otherwise, if $-m_{\min}>-n_{\max}$ we can write
$$
S(z)=\sum\limits_{k=-n_{\max}}^{-m_{\min}-1}\alpha_kz^{k}
+\sum\limits_{k=-m_{\min}}^{\infty}z^{k}\sum\limits_{i=1}^{r}\sum\limits_{j=0}^{k+n_i}\gamma^{k+n_i}_{i,j}
=\sum\limits_{k=-n_{\max}}^{-m_{\min}-1}\alpha_kz^{k}+S_1(z),
$$
where
$$
\alpha_k=\sum\limits_{i=1}^{r}\sum\limits_{j=0}^{k+n_i}\gamma^{k+n_i}_{i,j}.
$$
The purpose of the subsequent argument is to prove that $S_1(z)$ can be written as $z^{-m_{\min}}P_p(z)/(1-z)^{p+1}$, where
$$
p=\max\{-1,M-N-r+1\}
$$
and $P_p(z)$ is a polynomial of degree $p$ with the convention that $P_{-1}(z)\equiv0$. To this  end, for $k\in\Z$ introduce the sequence of rational functions:
\begin{equation}\label{eq:def_rational_f}
f_{k}(z)=\frac{(z-\bb-k+1)_{\m+k}}{(z-\a-k)_{\n+k+1}}.
\end{equation}
Note first that these functions are well defined and rational for both nonnegative and negative $k$.  Secondly,
for $k\ge-m_{\min}$ all rising factorials in the product $(z-\bb-k+1)_{\m+k}$ are polynomials (possibly of degree $0$) and all poles of the function $f_{k}(z)$ are points where $(z-a_i-k)_{k+n_i+1}=0$ and $k+n_i+1>0$, that is the points $z=a_i+k-j$ for $j=0,\ldots,k+n_i$.   Assuming that all poles of $f_k$ are simple for $k+n_i+1>0$ and $j=0,\ldots,k+n_i$ we will have by a straightforward calculation:
$$
\res\limits_{z=a_i+k-j}f_k(z)=\frac{(-1)^j(1-\bb+a_i-j)_{\m+k}}{(-\a_{[i]}+a_i-j)_{\n_{[i]}+k+1}j!(k+n_i-j)!}=\gamma^{k+n_i}_{i,j}.
$$
Hence, for all $k\ge-m_{\min}$
$$
\sum\limits_{\text{over all poles of }f_k(z)}\res{f_k(z)}=\sum\limits_{i=1}^{r}\sum\limits_{j=0}^{k+n_i}\gamma^{k+n_i}_{i,j},
$$
where only the terms with $k+n_i\ge0$ are non-vanishing.

Next, we use the fact that the sum of residues of a rational function at all finite points equals its residue at infinity \cite[(4.1.14)]{AblFokas}, which is the coefficient at $z^{-1}$ in the
asymptotic expansion
\begin{equation}\label{eq:fk-asymp}
f_k(z)\sim\sum\limits_{j=-\infty}^{M-N-r}C_{j}(k)z^{j},~~~\text{as}~z\to\infty,
\end{equation}
so that
\begin{equation}\label{eq:residue}
\sum\limits_{i=1}^{r}\sum\limits_{j=0}^{k+n_i}\gamma^{k+n_i}_{i,j}=C_{-1}(k).
\end{equation}

Our key observation is the following lemma.
\begin{lemma}\label{lm:residue-polynomial}
Let $p$ be defined in \eqref{eq:p-defined}. Then $C_{-1}(k)$ is a polynomial in $k$ of degree $p$ if $p\ge0$ and $C_{-1}=0$ if $p=-1$.
\end{lemma}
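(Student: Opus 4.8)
The plan is to read $C_{-1}(k)$ off the Laurent-at-infinity expansion \eqref{eq:fk-asymp} and to track its dependence on $k$ through the classical asymptotics of ratios of gamma functions. First I would put $f_k$ into gamma form. Since $(z-\bb-k+1)_{\m+k}=\prod_{s=1}^{r}\Gamma(z-b_s+m_s+1)/\Gamma(z-b_s-k+1)$ and $(z-\a-k)_{\n+k+1}=\prod_{s=1}^{r}\Gamma(z-a_s+n_s+1)/\Gamma(z-a_s-k)$, the definition \eqref{eq:def_rational_f} becomes
$$f_k(z)=\prod_{s=1}^{r}\frac{\Gamma(z-b_s+m_s+1)}{\Gamma(z-a_s+n_s+1)}\cdot\prod_{s=1}^{r}\frac{\Gamma(z-a_s-k)}{\Gamma(z-b_s-k+1)},$$
which confines the entire $k$-dependence to the second product. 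The crucial structural observation is that each $k$-dependent factor $\Gamma(z-a_s-k)/\Gamma(z-b_s-k+1)$ has exponent $(-a_s-k)-(-b_s-k+1)=b_s-a_s-1$ \emph{independent of} $k$, and that the exponents of all $2r$ factors add up to $\sum_{s}(m_s-n_s-1)=M-N-r$, the leading power in \eqref{eq:fk-asymp}.

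Next I would feed this into the standard expansion $\Gamma(z+\alpha)/\Gamma(z+\beta)\sim z^{\alpha-\beta}\sum_{n\ge0}c_n(\alpha,\beta)z^{-n}$ as $z\to\infty$, whose logarithm has $z^{-n}$-coefficient a fixed multiple of $B_{n+1}(\alpha)-B_{n+1}(\beta)$ for $n\ge1$. For a $k$-dependent factor one has $\alpha=-a_s-k$, $\beta=-b_s-k+1$, both affine in $k$ but with fixed difference $\mu=\alpha-\beta=b_s-a_s-1$. Since $B_{n+1}$ has degree $n+1$ with leading coefficient $1$, the constant-shift difference $B_{n+1}(\beta+\mu)-B_{n+1}(\beta)$ has its top term cancelled and is a polynomial of degree $\le n$ in $\beta$, hence of degree $\le n$ in $k$; the $k$-free factors contribute $k$-independent coefficients. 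Summing over $s$ and exponentiating, the coefficient $d_j(k)$ of $z^{-j}$ in $f_k(z)z^{-(M-N-r)}$ is a polynomial in $k$ of degree at most $j$. The coefficient of $z^{-1}$ in \eqref{eq:fk-asymp} is exactly $d_{M-N-r+1}(k)$, so when $p=M-N-r+1\ge0$ it is a polynomial in $k$ of degree at most $p$; and when $M-N-r\le-2$ (the case $p=-1$) the whole series of $f_k$ lives in powers $z^{M-N-r},z^{M-N-r-1},\dots$, all below $z^{-1}$, whence $C_{-1}(k)\equiv0$.

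To pin the degree at exactly $p$ I would identify the leading coefficient by the scaling $z=kw$. Because $\deg_k d_j\le j$, the limit $\lim_{k\to\infty}f_k(kw)(kw)^{-(M-N-r)}=\sum_{j\ge0}\delta_j w^{-j}$ exists, with $\delta_j$ the coefficient of $k^j$ in $d_j(k)$. Evaluating it from the gamma form, using $\Gamma(k(w-1)-a_s)/\Gamma(k(w-1)-b_s+1)\sim(k(w-1))^{b_s-a_s-1}$, collapses all powers of $k$ and gives $(w/(w-1))^{\nu}=\sum_{j\ge0}(\nu)_j w^{-j}/j!$ with $\nu=\sum_{s}(a_s-b_s+1)$; hence $C_{-1}(k)$ has leading term $(\nu)_p k^p/p!$, of exact degree $p$ whenever $(\nu)_p\neq0$. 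The main obstacle, and the point on which the lemma turns, is the degree-drop in the previous paragraph: it is precisely the $k$-independence of the exponents $b_s-a_s-1$ that forces the $z^{-n}$-coefficients to grow no faster than $k^{n}$ and thereby caps the degree of $C_{-1}(k)$ at $p$, rather than at the naive bound $2p$ one would obtain without this cancellation.
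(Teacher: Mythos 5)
Your proposal is correct and its core mechanism is the same as the paper's: both pass to the gamma-function form of $f_k$, expand via Bernoulli-polynomial asymptotics of (log-)gamma ratios at infinity, and exploit precisely the cancellation you isolate --- the $k$-dependent gamma factors pair up with $k$-independent exponents $b_s-a_s-1$, so the leading Bernoulli terms cancel and the coefficient of $z^{-j}$ in $z^{-(M-N-r)}f_k(z)$ is a polynomial in $k$ of degree at most $j$ rather than $j+1$; in the paper this is the statement that $Q_j(k)$ has degree $j$, ``the degree being decreased by $1$ due to cancelation of the leading terms,'' followed by the same exponentiation step. Where you genuinely diverge is the exact-degree claim. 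The paper gets it by tracking leading Bernoulli coefficients through the exponential formula ($q_s$ of degree $s$), stated without computing the leading constant; your scaling $z=kw$ instead produces the full generating function $\left(w/(w-1)\right)^{\nu}$, $\nu=\sum_{i}(a_i-b_i+1)$, of the top coefficients, yielding the explicit value $(\nu)_p/p!$ for the $k^p$-coefficient of $C_{-1}(k)$. This buys more than the paper proves, and it also honestly exposes a caveat the paper glosses over: when $(\nu)_p=0$ (e.g. $\nu=0$, which is compatible with $p\ge1$ since $\nu$ depends on $\a,\bb$ while $p$ depends only on the integer shifts) the degree genuinely drops below $p$ --- in that case the paper's own assertion that $Q_j$ has degree exactly $j$ fails too, and only the unconditional bound $\deg_k C_{-1}\le p$, which is all that the proof of Theorem~\ref{th:main} actually uses, survives. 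Two small points you should tighten: the interchange of $k\to\infty$ with the series $\sum_{j}d_j(k)(kw)^{-j}$ needs justification (for fixed large $k$ the Laurent series in $w^{-1}$ converges only for $|w|$ beyond the rescaled poles, so use Cauchy coefficient estimates on a circle $|w|=R$, or identify coefficients after proving locally uniform convergence to $\left(w/(w-1)\right)^{\nu}$); and the asymptotics $\Gamma(k(w-1)-a_s)/\Gamma(k(w-1)-b_s+1)\sim\left(k(w-1)\right)^{b_s-a_s-1}$ requires keeping $w$ away from $(-\infty,1]$ so that the gamma arguments stay in a sector $|\arg z|\le\pi-\delta$ where the expansion is valid.
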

\begin{proof}
It is easy to see from definition \eqref{eq:def_rational_f} of $f_k(z)$ that  $C_{-1}(k)=0$ if $p<0$ and $C_{-1}(k)=1$  if $p=0$.  Assume that $p>0$ and write in view of $(z-\bb-k+1)_{\m+k}=\Gamma(z-\bb+\m+1)/\Gamma(z-\bb-k+1)$:
$$
\log[f_k(z)]=\sum_{i=1}^{r}\Bigl\{\log\Gamma(z-b_i+m_i+1)-\log\Gamma(z-b_i+1-k)-\log\Gamma(z-a_i+n_i+1)+\log\Gamma(z-a_i-k)\Bigr\}.
$$
Next, we apply Hermite's asymptotic expansion for $\log\Gamma(z+a)$ \cite[(1.8)]{Nemes}
$$
\log\Gamma(z+a)\sim(z+a-1/2)\log{z}-z\frac{1}{2}+\log(2\pi)+\sum\limits_{j=2}^{\infty}\frac{(-1)^j\Be_{j}(a)}{j(j-1)z^{j-1}},
$$
as $|z|\to\infty$ in the domain $|\arg z|<\pi-\delta$,  $0<\delta<\pi$. Here $\Be_j(x)$ stands for the $j$-th Bernoulli polynomial \cite[24.2.3]{NIST} given in terms of Bernoulli numbers $\Be_{n}$ by
$$
\Be_n(x)=\sum_{l=0}^{n}\binom{n}{l}\Be_{n-l}x^l,~~~\Be_{0}=1,
$$
so that the leading coefficient of $\Be_n(x)$ is $1$. Substituting Hermite's expansion for each gamma function in the above formula for $\log[f_k(z)]$, we get after some cancelations:
$$
\log[f_k(z)]\sim \log(z^{p-1})+\sum\limits_{j=1}^{\infty}\frac{(-1)^{j+1}Q_{j}(k)}{(j)_2z^j},
$$
where $p-1=M-N-r\ge0$ by assumption and
$$
Q_{j}(k)=\sum\limits_{i=1}^{r}\left[\Be_{j+1}(-a_i-k)-\Be_{j+1}(-b_i+1-k)+\Be_{j+1}(1-b_i+m_i)-\Be_{j+1}(1-a_i+n_i)\right]
$$
is a polynomial in $k$ of degree $j$  (remember that the leading coefficient of $\Be_{j+1}(x)$ is $1$, so that the degree is decreased by $1$ due to cancelation of the leading terms). Exponentiating this expression, we find that
$$
z^{1-p}f_k(z)\sim 1+\sum\limits_{s\ge1}\frac{q_s(k)}{z^s},
$$
where, according to the well-known formula for an exponential of a power series  (see \cite[Lemma~1]{KPJMS2018} and references therein),
$$
q_s(k)=\sum\limits_{l=1}^{s}\frac{1}{l!}\sum\limits_{\substack{{s_1+\cdots+s_l=s}\\{s_i\ge1}}}\prod\limits_{i=1}^{l}Q_{s_i}(k).
$$
This expression shows that $q_s(k)$ is a polynomial in $k$ of degree $s$, implying that the coefficient $q_p(k)$ in front of $z^{-p}$ in the Maclaurin series of $z^{1-p}f_k(z)$ is a polynomial in $k$ of degree $p$.
\end{proof}

\textit{Returning to the proof of Theorem~\ref{th:main}} and writing $q_p(k)=\sum_{l=0}^{p}\nu_{l}k^l$ we
obtain, in view of \eqref{eq:residue} and Lemma~\ref{lm:residue-polynomial},
\begin{multline*}
S_1(z)=\sum\limits_{k=-m_{\min}}^{\infty}z^{k}\sum\limits_{i=1}^{r}\sum\limits_{j=0}^{k+n_i}\gamma^{k+n_i}_{i,j}
=\sum\limits_{k=-m_{\min}}^{\infty}z^{k}C_{-1}(k)
=\sum\limits_{k=-m_{\min}}^{\infty}z^{k}q_p(k)
\\
=z^{-m_{\min}}\sum\limits_{j=0}^{\infty}q_p(j-m_{\min})z^j
=z^{-m_{\min}}\sum_{l=0}^{p}\nu_{l}\sum\limits_{j=0}^{\infty}(j-m_{\min})^lz^j.
\end{multline*}
It is easy to check that
$$
\sum\limits_{j=0}^{\infty}(j-m_{\min})^lz^j=\frac{\tilde{P}_{l}(z)}{(1-z)^{l+1}},
$$
where $\tilde{P}_l(z)$ is a polynomial of degree $l$ except for $m_{\min}=-1$ when $\tilde{P}_l(z)$ is a polynomial of degree $l-1$. Consequently,
$$
S_1(z)=\sum\limits_{k=-m_{\min}}^{\infty}q_p(k)z^k=z^{-m_{\min}}\sum_{l=0}^{p}\nu_{l}\frac{\tilde{P}_{l}(z)}{(1-z)^{l+1}}
=\frac{z^{-m_{\min}}P_{p}(z)}{(1-z)^{p+1}},
$$
where in accordance to the previous convention $P_{-1}(z)=0$.  Hence, finally,
$$
S(z)=\sum\limits_{k=-n_{\max}}^{-m_{\min}-1}\alpha_kz^{k}+S_1(z)=\sum\limits_{k=-n_{\max}}^{-m_{\min}-1}\alpha_kz^{k}
+\frac{z^{-m_{\min}}P_{p}(z)}{(1-z)^{p+1}}=(1-z)^{-p-1}\sum\limits_{j=-n_{\max}}^{p-m_{\min}}\beta_jz^{j}.~~~~\square
$$

\medskip

Next, we extend formula \eqref{eq:main} to the hypergeometric functions ${}_sF_{r-1}$ with $0\le{s}\le{r-1}$, $r\ge2$.
Assume that $\bb=(b_1,\ldots,b_s)\in\C^s$, $\a=(a_1,\ldots,a_r)\in\C^{r}$, $\m=(m_1,\ldots,m_s)\in\Z^s$  and  $\n=(n_1,\ldots,n_r)\in\Z^r$.  In agreement with the previous notation, write
$$
M=m_1+\cdots+{m_s},~~~N=n_1+\cdots+{n_r};
$$
and denote by $[x]$ the largest integer not exceeding $x$;  the symbols $m_{\min}$ and $n_{\max}$ retain their (intuitive) meanings from above.

\begin{theorem}\label{th:confluent}
Suppose $0\le{s}\le{r-1}$, $r\ge2$, and the components of $\a$ are distinct modulo integers.  Define  $p=[(M-N-r+1)/(r-s)]$. Then, for some complex number $\beta_j$,
\begin{multline}\label{eq:confluent}
\sum\limits_{i=1}^{r}\frac{(1-\bb+a_i)_{\m-n_i}z^{-n_i}}{(-1)^{s-r}(a_i-\a_{[i]})_{\n_{[i]}-n_i+1}}
{}_{s}F_{r-1}\bigg(\begin{matrix}\bb-a_i\\1+\a_{[i]}-a_i\end{matrix}\Big\vert z\bigg)
{}_{s}F_{r-1}\bigg(\begin{matrix}1-\bb+a_i+\m-n_i\\1-\a_{[i]}+a_i+\n_{[i]}-n_i\end{matrix}\Big\vert (-1)^{r-s}z\bigg)
\\
=\sum\limits_{j=-n_{\max}}^{\max(-m_{\min}-1,p)}\beta_jz^{j}.
\end{multline}
\end{theorem}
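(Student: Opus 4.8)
The plan is to obtain \eqref{eq:confluent} as a confluent limit of Theorem~\ref{th:main}. Starting from \eqref{eq:main} with full $r$-vectors $\bb$ and $\m$, I would set $m_{s+1}=\dots=m_r=0$ (so that $M$ is unchanged), write the argument there as $\zeta=z/\Lambda$ with $\Lambda=b_{s+1}\cdots b_r$, and let $b_{s+1},\dots,b_r\to\infty$. The tool is the standard confluence ${}_{u+1}F_{v}(\dots,c\,;\dots\mid w/c)\to{}_{u}F_{v}(\dots;\dots\mid w)$ as $c\to\infty$, applied $r-s$ times.

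First I would pass to the limit in the two hypergeometric factors and in the prefactor separately. In the first ${}_rF_{r-1}$ the upper parameters $b_l-a_i$ with $l>s$ contribute $\prod_{l>s}(b_l-a_i)_{j}/\Lambda^{j}\to1$, so this factor tends to ${}_sF_{r-1}(\bb-a_i;1+\a_{[i]}-a_i\mid z)$. In the second ${}_rF_{r-1}$ the corresponding upper parameters $1-b_l+a_i-n_i$ behave like $-b_l$, whence $\prod_{l>s}(1-b_l+a_i-n_i)_{j}/\Lambda^{j}\to(-1)^{(r-s)j}$ and the factor converges to ${}_sF_{r-1}$ evaluated at $(-1)^{r-s}z$; this is the origin of the reflected argument in \eqref{eq:confluent}. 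Finally $(1-b_l+a_i)_{-n_i}\sim(-b_l)^{-n_i}$ for $l>s$, which against the factor $\Lambda^{n_i}$ produced by $\zeta^{-n_i}=(z/\Lambda)^{-n_i}$ yields $(-1)^{(r-s)n_i}$; with the surviving $s$ factors this reconstructs the prefactor of \eqref{eq:confluent}. Hence the left-hand side of \eqref{eq:confluent} is the confluent limit of that of \eqref{eq:main}, and, \eqref{eq:main} being an identity, it equals the limit of its right-hand side.

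To identify that limit as the asserted finite Laurent polynomial I would not pass to the limit in the coefficients $\beta_j$ of \eqref{eq:main} directly, but rather recompute the coefficients of the limiting series by the residue method of the proof of Theorem~\ref{th:main}. Expanding the limiting left-hand side by the Cauchy product and simplifying with $(z)_j=(-1)^j(1-z-j)_j$ as before, each summand is again a residue of
\[
f_k(z)=\frac{(z-\bb-k+1)_{\m+k}}{(z-\a-k)_{\n+k+1}},
\]
now with only $s$ rising factorials in the numerator; the coefficient of $z^k$ equals $(-1)^{(r-s)k}$ times the sum of the finite residues of $f_k$, hence $(-1)^{(r-s)k}C_{-1}(k)$ by the residue-at-infinity identity \eqref{eq:fk-asymp}--\eqref{eq:residue}. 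The decisive point, replacing Lemma~\ref{lm:residue-polynomial}, is a pure degree count: for $k\ge-m_{\min}$ the numerator of $f_k$ has degree $M+sk$ and the denominator degree $N+r(k+1)$, so $f_k(z)\sim z^{\,M-N-r-(r-s)k}$ as $z\to\infty$. Because $r-s\ge1$, this exponent strictly decreases in $k$ and falls below $-1$ exactly when $k>p=[(M-N-r+1)/(r-s)]$; therefore $C_{-1}(k)=0$ for all $k>p$ and the coefficient series \emph{terminates}. This is precisely the mechanism by which the geometric tail --- and with it the factor $(1-z)^{-p-1}$ of \eqref{eq:main} --- disappears when $s<r$; for $s=r$ one has $r-s=0$, the exponent is constant in $k$, $C_{-1}(k)$ is the degree-$p$ polynomial of Lemma~\ref{lm:residue-polynomial}, and the series does not terminate. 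Exactly as for Theorem~\ref{th:main}, the finitely many low-order terms $-n_{\max}\le k\le-m_{\min}-1$ (where some $m_l+k<0$, so $f_k$ acquires extra poles and the residue identity does not apply) are carried along as separate coefficients; combined with the residue terms, which now run only up to $k=p$, this gives the stated range $-n_{\max}\le j\le\max(-m_{\min}-1,p)$.

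I expect the sign and normalisation bookkeeping to be the main obstacle. The crux is that the per-term factor $(-1)^{(r-s)n_i}$ coming from the prefactor limit must combine with the reflected argument $(-1)^{r-s}z$ and with the $(-1)^j$ inherent in the residue at $z=a_i+k-j$ to produce a factor \emph{independent of} $i$, namely $(-1)^{(r-s)k}$; it is exactly this cancellation of the $i$-dependence that lets the single quantity $C_{-1}(k)$ govern all $r$ pole groups at once, and so makes the degree count conclusive. One therefore has to verify that the normalisation displayed as $(-1)^{s-r}$ in \eqref{eq:confluent} is to be read together with this per-summand factor $(-1)^{(r-s)n_i}$, and to justify exchanging the confluent limit with the (now finite) summation. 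The remaining manipulations are the same routine Pochhammer simplifications already carried out in the proof of Theorem~\ref{th:main}.
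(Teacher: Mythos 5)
Your proposal is correct, and its decisive step is exactly the paper's own proof of Theorem~\ref{th:confluent}: rerun the residue computation of Theorem~\ref{th:main} with only $s$ rising factorials in the numerator of $f_k$, observe $f_k(z)\sim z^{\,M-N-r-(r-s)k}$ so that $C_{-1}(k)=0$ once $(r-s)k>M-N-r+1$, i.e.\ $k>p$, hence $S_1$ terminates and, together with the separately carried low-order terms $-n_{\max}\le k\le -m_{\min}-1$, yields the stated exponent range $-n_{\max}\le j\le\max(-m_{\min}-1,p)$. Your confluence preamble is additional motivation the paper omits, and the caveat you flag is a genuine and correct observation about bookkeeping the paper's one-line proof leaves implicit: redoing the Cauchy-product/Pochhammer reduction in the confluent case produces the per-summand sign $(-1)^{(r-s)(k+n_i+1)}$, so the displayed normalization $(-1)^{s-r}$ must indeed be read as acting per summand (as $(-1)^{(s-r)n_i}$, which is what the confluent limit of Theorem~\ref{th:main} actually produces) in order for the $i$-dependence to cancel and the coefficient of $z^k$ to reduce to the single quantity $(-1)^{(r-s)(k+1)}C_{-1}(k)$, a $k$-dependent sign that is harmless for the conclusion.
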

\begin{proof}
Follow the proof of Theorem~\ref{th:main} up to formula \eqref{eq:fk-asymp} taking here the following form
$$
f_k(z)\sim\sum\limits_{j=-\infty}^{M-N-r-(r-s)k}C_{j}(k)z^{j},~~~\text{as}~z\to\infty.
$$
This implies that $C_{-1}(k)=0$ for $M-N-r-(r-s)k<-1$, i.e. for $(r-s)k>M-N-r+1$. Define $p=[(M-N-r+1)/(r-s)]$. Then
$$
S_1(z)=\sum\limits_{k=-m_{\min}}^{\infty}C_{-1}(k)z^{k}
=\left\{\!\!\!\begin{array}{ll}\sum_{k=-m_{\min}}^{(r-s)k\le{M-N-r+1}}C_{-1}(k)z^{k}, & p\le{m_{\min}}\\[7pt]0, & p>{m_{\min}}.\end{array}\right.
$$
Hence, the ultimate line of the proof of Theorem~\ref{th:main} under conditions of Theorem~\ref{th:confluent} becomes:
$$
S(z)=\sum\limits_{k=-n_{\max}}^{-m_{\min}-1}\alpha_kz^{k}+S_1(z)=\sum\limits_{j=-n_{\max}}^{\max(-m_{\min}-1,p)}\beta_jz^{j}.
$$
\end{proof}

\noindent\textbf{Remark.} For $r=2$, $s=0$ we can apply the relation \cite[section~10.2]{NIST}
$$
J_{\nu}(x)=\frac{(x/2)^{\nu}}{\Gamma(\nu+1)}{}_0F_{1}\bigg(\begin{matrix}-\\\nu+1\end{matrix}\Big\vert -\frac{x^2}{4}\bigg)
$$
for the Bessel function $J_{\nu}$ to rewrite this case of identity \eqref{eq:confluent} in terms of the Bessel functions.
After some rearrangement this yields ($m\in\Z$):
$$
(-1)^mJ_{-\nu}(x)J_{\nu+m}(x)-J_{\nu}(x)J_{-\nu-m}(x)=\sum\limits_{j=0}^{2j<|m|}\frac{\beta_j}{x^{|m|-2j}},
$$
where the coefficients $\beta_j$ can be computed explicitly using the fact that $J_{\nu}(x)J_{\mu}(x)$ can be written as ${}_2F_{3}$ \cite[7.2(49)]{Bateman}.  The left hand side of the above formula can be interpreted as a ''generalized Wronskian''. Certainly, formula \cite[7.2(49)]{Bateman} can be used to give a direct proof of the above relation. We believe that it is known, but we could not immediately locate it in the literature.

\bigskip
\bigskip
\noindent{\bf Acknowledgments.} We thank Professor Leonid Kovalev (Syracuse University, USA) for useful discussions regarding Lemma~\ref{lm:residue-polynomial}.


\end{document}